\theoremstyle{plain}
\newtheorem{theorem}{Theorem}[section]
\newtheorem{lemma}[theorem]{Lemma}
\theoremstyle{definition}
\newtheorem{definition}[theorem]{Definition}
\newtheorem{problem}[theorem]{Problem}
\newcommand{\s}{\mathcal}
\newcommand{\GL}{{\rm GL}}
\newcommand{\SL}{{\rm SL}}
\newcommand{\ppd}{{\rm ppd}}
\newcommand{\fat}{{\rm fat}}
\newcommand{\redfat}{\mbox{\rm{red}\hspace*{-.01cm}\emph{and}\hspace*{.02cm}\rm{fat}}}
\newcommand{\redIfFat}{\mbox{\rm{red}\emph{if}\rm{fat}}}
\begin{document}

\title{Irreducible linear subgroups generated by pairs of matrices with large irreducible submodules}
\author{Alice C. Niemeyer, Sabina B. Pannek, Cheryl E. Praeger}
\date{}

\maketitle

\begin{abstract}
We call an element of a finite general linear group $ \GL(d,q) $ \emph{fat} if it leaves invariant, and acts irreducibly on, a subspace of dimension greater than $d/2$. Fatness of an element can be decided efficiently in practice by testing whether its characteristic polynomial has an irreducible factor of degree greater than $d/2$. We show that for groups $G$ with $ \SL(d,q) \leq G \leq \GL(d,q) $ most pairs of fat elements from $G$ generate irreducible subgroups, namely we prove that the proportion of pairs of fat elements generating a reducible subgroup, in the set of all pairs in $ G \times G $, is less than $q^{-d+1}$.
We also prove that the conditional probability to obtain a pair $(g_1,g_2)$ in $G \times G$ which generates a reducible subgroup, given that $g_1, g_2$ are fat elements, is less than $2q^{-d+1}$.
Further, we show that any reducible subgroup generated by a pair of fat elements acts irreducibly on a subspace of dimension greater than $ d/2 $, and in the induced action the generating pair corresponds to a pair of fat elements.
\end{abstract}

\noindent\textbf{Mathematics Subject Classification (2010).} Primary 20G40; Secondary 20P05.

\noindent\textbf{Keywords.} General linear group, proportions of elements, large irreducible subspaces.

{\let\thefootnote\relax\footnote{{The results of this paper form part of the Australian Research Council funded project DP110101153 of the first and third author. The third author is also supported by the Australian Research Council Federation Fellowship FF0776186.\\
The second author is greatful for support of her PhD Fellowship funded by the German National Academic Foundation (Studienstiftung des deutschen Volkes). This paper is part of her PhD project as a co-tutelle student at RWTH Aachen University and the University of Western Australia.}}}

\section{Introduction}
Consider the finite general linear group $ \GL(d,q) $ for $ d \geq 3 $, that is the group of invertible $ (d \times d) $-matrices over the finite field $ \mathbb{F}_q $ of order~$ q $. For a subgroup $G$ of $ \GL(d,q) $ the underlying vector space of row vectors of length $ d $ over $ \mathbb{F}_q $ becomes a right $ \mathbb{F}_q G $-module via the natural ``vector times matrix'' action. We call this module the \emph{natural $ \mathbb{F}_q G $-module}. An element $ g \in \GL(d,q) $ is said to be \emph{fat}, or more precisely a \emph{$\fat(d,q;e)$-element}, if the natural $ \mathbb{F}_q \GL(d,q) $-module has an irreducible $ \mathbb{F}_q\langle g \rangle $-submodule of dimension $e > d / 2 $, or equivalently, if the characteristic polynomial for $g$ has an irreducible factor over $\mathbb{F}_q$ of degree $e$. \emph{Fat pairs}, that is pairs of fat elements, relative to the (not necessarily distinct) integers $ e_1,e_2$ are called \emph{$\fat(d,q;e_1,e_2)$-pairs}. Further, a pair $ (g_1,g_2)$ in $\GL(d,q) \times \GL(d,q) $ is said to be \emph{reducible} or \emph{irreducible} according as the natural $ \mathbb{F}_q \langle g_1,g_2\rangle $-module has this property.

Let $ \SL(d,q) $ denote the finite special linear group, the group of all matrices in $ \GL(d,q) $ with determinant 1. Motivated by the wish to upgrade the Classical Recognition Algorithm \cite{MR1625479} (see discussion in Section~$\ref{section motivation}$), we study fat pairs in $G \times G$ for a matrix group $G$ satisfying $ \SL(d,q) \leq G \leq \GL(d,q) $. We first give an explicit upper bound for the proportion of reducible fat pairs in the set of all pairs in $G \times G $. We denote this proportion by $\redfat(G)$.

\begin{theorem}\label{theorem proportion ref and fat}
	Let $d \geq 3$. If $G$ is a group with $\SL(d,q)\leq G\leq\GL(d,q) $, then
		\[\redfat(G)  < q^{-d+1}.\]
\end{theorem}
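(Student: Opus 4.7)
The plan is to combine a structural dichotomy for the invariant subspaces under a fat pair with a union bound over Grassmannians, sharpened by the constraint that fatness places on the blocks of a parabolic stabiliser.

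First, I would prove the following dichotomy: for any fat pair $(g_1,g_2)$ with irreducible $\langle g_i\rangle$-submodules $U_i$ of dimensions $e_i>d/2$, every proper nonzero $\langle g_1,g_2\rangle$-invariant subspace $W$ of the natural module satisfies $\dim W>d/2$ or $\dim W<d/2$. Each intersection $W\cap U_i$ is a $\langle g_i\rangle$-submodule of the irreducible $U_i$ and so equals $\{0\}$ or $U_i$, while the mixed alternative $U_1\subseteq W$, $U_2\cap W=\{0\}$ is excluded by the inequality $e_1+e_2>d$.

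Since $\SL(d,q)\leq G$, the group $G$ acts transitively on the $e$-dimensional subspaces of the natural module, giving $|{\rm Stab}_G(W)|/|G|=1/\binom{d}{e}_q$. If a fat pair stabilises $W$, then the unique irreducible factor of degree $>d/2$ in each characteristic polynomial must lie in the block acting on $V/W$ (when $e<d/2$) or on $W$ (when $e>d/2$); let $\alpha_e$ denote the proportion of elements of the corresponding $\GL$-factor of dimension $\max(e,d-e)$ whose characteristic polynomial contains an irreducible factor of degree $>d/2$. Using independence of $g_1$ and $g_2$ and a union bound over all $W$ of dimension $e\in\{1,\dots,d-1\}\setminus\{d/2\}$,
\[
\redfat(G) \;\leq\; \sum_{e\ne d/2}\binom{d}{e}_q\cdot\left(\frac{\alpha_e}{\binom{d}{e}_q}\right)^{\!2} \;=\; \sum_{e\ne d/2}\frac{\alpha_e^2}{\binom{d}{e}_q}.
\]

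Since $\binom{d}{e}_q\geq q^{e(d-e)}$, the sum is dominated by $e=1$ and $e=d-1$, both of which draw $\alpha_e$ from $\GL(d-1,q)$ against the same threshold $d/2$; the intermediate terms are bounded by $O(q^{-2d+4})$. Writing $\alpha$ for this common proportion, the dominant bound is $2\alpha^2(q-1)/(q^d-1)$, and $2\alpha^2(q-1)/(q^d-1)<q^{-d+1}$ rearranges to $2\alpha^2<q/(q-1)$, tightest in the limit $q\to\infty$ where it becomes $\alpha<1/\sqrt{2}$. The main obstacle is therefore the uniform estimate $\alpha<1/\sqrt{2}$ on the proportion of elements of $\GL(d-1,q)$ whose characteristic polynomial has an irreducible factor of degree strictly greater than $d/2$; I expect this to follow from classical generating-function estimates for $\GL(n,q)$, whose large-$n$ limit is $\log 2<1/\sqrt{2}$, combined with direct inspection of the small cases $d=3,4$.
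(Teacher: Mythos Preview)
Your approach is essentially the paper's, reorganised: you sum first over the dimension $e$ of an invariant subspace and aggregate the fat proportions into a single $\alpha_e$, whereas the paper fixes the pair $(e_1,e_2)$ first and then sums over subspaces. After interchanging the order of summation the two computations coincide, with your $\alpha_1=\sum_{e_1>d/2}^{d-1}\fat(e_1)$ being exactly the quantity the paper bounds by $\ln 2$. Two of your asserted ingredients are made precise in the paper: the claim that $\alpha_e$ is computed in the large $\GL$-block is the content of Lemma~\ref{lemma fat(G,e) = fat(e) = ...}, and the bound $\alpha<1/\sqrt 2$ follows uniformly (not just asymptotically) from the elementwise estimate $\fat(e)<1/e$ of Lemma~\ref{lemma proportion of fat elements}, which gives $\alpha<\sum_{e>d/2}^{d-1}1/e<\ln 2$.

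The one genuine soft spot is your treatment of the non-dominant terms. Showing that the dominant contribution $2\alpha^2(q-1)/(q^d-1)$ lies below $q^{-d+1}$ and that the remainder is $O(q^{-2d+4})$ does not by itself yield the total bound, because the margin is extremely thin: in the worst case $q\to\infty$ you need $2\alpha^2<1$, and the actual value $2(\ln 2)^2\approx 0.961$ leaves only about $4\%$ headroom to absorb all the intermediate terms. The paper closes this gap cleanly by proving the uniform estimate $\sum_{e=1}^{\lceil d/2\rceil-1}\binom{d}{e}_q^{-1}<q^{-d+1}$ (Lemma~\ref{lemma sum over gaussian coeff inverse}), after which the total is at most $2\alpha^2\cdot q^{-d+1}<q^{-d+1}$ in one stroke; you should either prove this lemma or replace the $O(\cdot)$ handwave by a quantitative bound checked against the available margin.
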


Let $\redIfFat(G)$ be the proportion of reducible pairs in the set of fat pairs in $G \times G$. Equivalently, we may define $\redIfFat(G)$ to be the (conditional) probability that, on a single random selection from the set of fat pairs in $G \times G$, we obtain a reducible pair. An upper bound for $\redIfFat(G)$ is given~in

\begin{theorem}\label{theorem proportion red if fat}
	Let $d \geq 3$. If $G$ is a group with $\SL(d,q)\leq G\leq\GL(d,q) $, then
		\[\redIfFat(G)  < 2q^{-d+1}.\]
\end{theorem}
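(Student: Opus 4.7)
My plan is to derive Theorem~\ref{theorem proportion red if fat} from Theorem~\ref{theorem proportion ref and fat} by relating the two proportions via the proportion of fat elements. Let $F \subseteq G$ denote the set of fat elements in $G$, and let $R \subseteq G \times G$ denote the set of reducible fat pairs. Since the fat property of $(g_1,g_2)$ depends only on the two entries separately, the set of fat pairs is precisely $F \times F$, and so
$$\redIfFat(G) \;=\; \frac{|R|}{|F|^2} \;=\; \frac{\redfat(G)}{(|F|/|G|)^2}.$$
Combining this identity with Theorem~\ref{theorem proportion ref and fat}, the desired inequality $\redIfFat(G) < 2 q^{-d+1}$ reduces to showing that the proportion of fat elements satisfies $|F|/|G| > 1/\sqrt{2}$.

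The key step is therefore to give a lower bound on $|F|/|G|$. For each integer $e$ with $d/2 < e \le d$, an element whose characteristic polynomial has an irreducible factor of exact degree $e$ is conjugate in $\GL(d,q)$ to a block diagonal matrix whose first block is the companion matrix of an irreducible polynomial of degree $e$ and whose second block is an arbitrary element of $\GL(d-e,q)$; its centralizer in $\GL(d,q)$ factors as $\mathbb{F}_{q^e}^\times \times C$ with $C$ a centralizer inside $\GL(d-e,q)$. A standard conjugacy-class count then gives that the proportion of such elements in $\GL(d,q)$ equals $N_e/(q^e-1)$, where $N_e = \frac{1}{e}\sum_{k \mid e} \mu(k)\, q^{e/k}$ is the number of monic irreducible polynomials of degree $e$ over $\mathbb{F}_q$. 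The same proportion is obtained in any $G$ between $\SL(d,q)$ and $\GL(d,q)$, since multiplication by a scalar does not affect the factorisation type of the characteristic polynomial. Summing over admissible $e$ yields $|F|/|G| = \sum_{d/2 < e \le d} N_e/(q^e-1)$.

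The main obstacle is that the leading asymptotic of this sum, $\sum_{d/2 < e \le d} 1/e$, tends only to $\log 2 \approx 0.693$ as $d \to \infty$, which sits just below the required threshold $1/\sqrt{2} \approx 0.707$. To close the gap I would first exploit the positive corrections to $1/e$ coming from $N_e/(q^e-1)$ at the endpoint $e=d$ (where the $q$-dependent deviation from $1/d$ is appreciable when $d$ is small) together with the contributions at $e=d-1, d-2$, to beat the asymptotic uniformly for all but a short list of small pairs $(d,q)$; for those residual pairs, rather than fighting $|F|/|G|$ directly, I would sharpen Theorem~\ref{theorem proportion ref and fat} itself in those cases by invoking the third result stated in the abstract, which forces any reducible fat pair to preserve a \emph{unique} irreducible subspace of dimension greater than $d/2$. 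That rigidity pins down the structure of $R$ enough to recoup the extra factor of $2$ needed in $2q^{-d+1}$.
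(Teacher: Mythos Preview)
Your reduction to the inequality $|F|/|G| > 1/\sqrt{2}$ is fatal, because that inequality is simply false once $d$ is not tiny. By Lemma~\ref{lemma fat(G,e) = fat(e) = ...} and Lemma~\ref{lemma proportion of fat elements} one has
\[
\frac{|F|}{|G|}\;=\;\sum_{d/2<e\le d}\fat(e)\;<\;\sum_{d/2<e\le d}\frac{1}{e},
\]
and for even $d=2n$ the right-hand side is $\sum_{k=n+1}^{2n} 1/k < \int_n^{2n} dx/x = \ln 2 \approx 0.693 < 1/\sqrt{2}$. Thus for \emph{every} even $d\ge 4$ you already have $|F|/|G|<\ln 2<1/\sqrt{2}$, and no finite case check can rescue the argument. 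Your proposed ``positive corrections'' go the wrong way: since $eN_e=\sum_{k\mid e}\mu(k)q^{e/k}\le q^e-q<q^e-1$ for $e\ge 2$, one has $N_e/(q^e-1)<1/e$, so the corrections to $1/e$ are negative, not positive. The obstruction is asymptotic in $d$, not confined to small pairs $(d,q)$ as your last paragraph assumes.

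The paper avoids this obstacle by not passing through the aggregate bound of Theorem~\ref{theorem proportion ref and fat} at all. Instead it uses the parametrised estimate of Lemma~\ref{lemma proportion},
\[
\redfat(G;e_1,e_2)\;<\;2\,\fat(e_1)\,\fat(e_2)\,q^{-d+1},
\]
which already carries the factor $\fat(e_1)\fat(e_2)$. Writing both numerator and denominator of $\redIfFat(G)$ as sums over $(e_1,e_2)$, that factor cancels term by term and yields $\redIfFat(G)<2q^{-d+1}$ directly, with no need to bound $|F|/|G|$ from below. In other words, the factor of $2$ is precisely the loss incurred inside the proof of Theorem~\ref{theorem proportion ref and fat} (from the symmetry $\binom{d}{w}_q=\binom{d}{d-w}_q$), and the correct route is to keep the intermediate Lemma~\ref{lemma proportion} rather than the weakened summed version.
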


Our next theorem shows that each reducible fat pair leads to an irreducible fat pair on a quotient space of dimension greater than $d/2$.

\begin{theorem}\label{theorem reduction}
	For integers $ d,e_1,e_2 $ satisfying $ 1 < d/2 < e_1,e_2 \leq d $, let $ (g_1, g_2) \in \GL(d,q) \times \GL(d,q) $ be a $ \fat(d,q;e_1,e_2) $-pair, and let $ \s{V} $ be the natural $ \mathbb{F}_q \GL(d,q)$-module. Then there exists an $ \mathbb{F}_q \langle g_1, g_2 \rangle $-composition factor $ \s{N}$ of~$\s{V} $ with $ n= \dim(\s{N}) \geq \max\{e_1,e_2\} > d/2$, such that writing $ \overline{g_i} $ for the element in $ \GL(n,q) $ induced by $ g_i $ on $ \s{N} $, $(\overline{g_1}, \overline{g_2}) $ is an irreducible $ \fat(n,q; e_1, e_2) $-pair.
\end{theorem}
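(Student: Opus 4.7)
\smallskip
\noindent\textbf{Proof plan for Theorem 1.3.}
Set $H = \langle g_1, g_2\rangle$ and let
\[
0 = \s{V}_0 \subset \s{V}_1 \subset \cdots \subset \s{V}_k = \s{V}
\]
be a composition series of $\s{V}$ as an $\mathbb{F}_q H$-module, with composition factors $\s{F}_j = \s{V}_j / \s{V}_{j-1}$. Any candidate $\s{N}$ satisfying the conclusion must be such a composition factor, so the plan is to single out a suitable $\s{F}_j$ by tracking the irreducible $\langle g_i\rangle$-submodules witnessing fatness.

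For $i = 1,2$, fix an irreducible $\mathbb{F}_q\langle g_i\rangle$-submodule $\s{U}_i \leq \s{V}$ with $\dim(\s{U}_i) = e_i > d/2$. Let $j_i$ be the smallest index with $\s{U}_i \subseteq \s{V}_{j_i}$. Because $\s{U}_i \cap \s{V}_{j_i - 1}$ is a proper $\langle g_i\rangle$-submodule of the irreducible module $\s{U}_i$, it must be zero, so the natural map $\s{U}_i \to \s{F}_{j_i}$ is an $\mathbb{F}_q\langle g_i\rangle$-embedding. In particular, $\dim(\s{F}_{j_i}) \geq e_i > d/2$.

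Since the dimensions of the composition factors sum to $d$, at most one $\s{F}_j$ can have dimension greater than $d/2$; this forces $j_1 = j_2 =: j$. Set $\s{N} = \s{F}_j$ and $n = \dim(\s{N}) \geq \max\{e_1, e_2\} > d/2$. By construction, $\s{N}$ is an irreducible $\mathbb{F}_q H$-module, so $(\overline{g_1}, \overline{g_2}) \in \GL(n,q) \times \GL(n,q)$ is an irreducible pair. Moreover, the embedded copies of $\s{U}_1$ and $\s{U}_2$ in $\s{N}$ are irreducible $\langle \overline{g_i}\rangle$-submodules of dimension $e_i > n/2$, so each $\overline{g_i}$ is a $\fat(n,q;e_i)$-element and $(\overline{g_1}, \overline{g_2})$ is a $\fat(n,q;e_1,e_2)$-pair, as required.

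The only real pitfall is confirming that one single composition factor works for both $g_1$ and $g_2$; this is precisely where the hypothesis $e_1, e_2 > d/2$ is used via the ``sum of dimensions equals $d$'' argument. Everything else is a standard composition-series bookkeeping exercise, with the embedding $\s{U}_i \hookrightarrow \s{F}_{j_i}$ being the key observation that transfers fatness from $\s{V}$ to $\s{N}$.
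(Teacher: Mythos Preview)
Your argument is correct, and it takes a genuinely different route from the paper. The paper constructs the composition factor explicitly: it sets $\s{X}=\langle \s{U}_1,\s{U}_2\rangle_{\mathbb{F}_q H}$, takes $\s{Y}\leq\s{X}$ maximal among $H$-submodules meeting both $\s{U}_i$ trivially, and puts $\s{N}=\s{X}/\s{Y}$; irreducibility of $\s{N}$ is then deduced from the dichotomy Lemma~\ref{lemma cases for W}, which says that any $H$-submodule either misses both $\s{U}_i$ or contains $\s{X}$. You instead fix an arbitrary composition series, locate for each $i$ the first term $\s{V}_{j_i}$ containing $\s{U}_i$, observe that $\s{U}_i$ embeds in $\s{F}_{j_i}$, and then use the elementary dimension count $\sum_j\dim(\s{F}_j)=d$ together with $e_i>d/2$ to force $j_1=j_2$. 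Your approach is shorter and sidesteps Lemma~\ref{lemma cases for W} entirely for this theorem; the paper's approach, on the other hand, pins down the relevant subquotient intrinsically (without reference to a chosen composition series) and reuses a lemma it needs elsewhere anyway. Both are valid and the underlying reason they work is the same: the hypothesis $e_i>d/2$ guarantees that the two large irreducible cyclic submodules cannot be separated by any $H$-invariant filtration.
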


The proofs of Theorems~$\ref{theorem proportion ref and fat}$ and~$\ref{theorem proportion red if fat}$ (see Subsections~$\ref{subsection proof of Theorem red and fat}$ and~$\ref{subsection theorem proportion red if fat}$) rely on the following observation. A fat pair $(g_1,g_2) \in G \times G$, where $G$ satisfies $\SL(d,q) \leq G \leq \GL(d,q)$, is reducible, if and only if there exists a non-trivial and proper $\langle g_1, g_2 \rangle$-invariant subspace $\s{W} \leq \s{V}$. In this case $g_1,g_2$ lie in the maximal parabolic subgroup $G_\s{W} \leq G$. The key ingredient to prove Theorems~$\ref{theorem proportion ref and fat}$ and~$\ref{theorem proportion red if fat}$ is to show in Lemma~$\ref{lemma fat(G,e) = fat(e) = ...}$ that for $e > d/2$ the proportion of $\fat(d,q;e)$-elements in $G_\s{W}$ equals the proportion of $\fat(d,q;e)$-elements in $\GL(d,q)$. The results then follow by summing the number of fat pairs over all possible maximal parabolic subgroups of $G$. The proof of Theorem~$\ref{theorem reduction}$ is presented in Subsection~$\ref{subsection proof of Theorem reduction}$. In Section~$\ref{section motivation}$ we motivate the results of this paper. The linear algebra background required is presented in Section~$\ref{section linear algebra preliminaries}$, while the group theoretic preliminaries are in Section~$\ref{section group theory preliminaries}$.

% ------------------------------------------------------------------------

\section{Motivation}\label{section motivation}

The principal motivation for the work reported in this paper is the \emph{Classical  Recognition Algorithm} \cite{MR1625479}. This is a one-sided Monte Carlo algorithm that, given a set of generating matrices for a subgroup $G$ of the finite general linear group $\GL(d,q)$, examines whether $G$ contains a ``classical group'' in its natural representation, that is whether (in its natural representation) $G$ contains $\SL(d,q)$, or a $d$-dimensional symplectic, unitary or orthogonal group defined over $\mathbb{F}_q$. The performance of the algorithm has been described by Leedham-Green in \cite{MR1829483} as ``one of the most efficient algorithms in the business''. The algorithm seeks particular kinds of elements, called ppd-elements, in $G$ by making independent uniformly distributed random selections of elements from $G$. A \emph{ppd-element}, or more precisely a \emph{$\ppd(d,q;e)$-element} for some integer $e$ with $e \leq d$, is an element $ g \in \GL(d, q)$ such that $g$ has order divisible by a prime divisor of $q^e-1$ which does not divide $q^j - 1$ for any $j < e$. It is shown in \cite{MR1625479} that $\ppd(d,q;e)$-elements with $ e $ greater than $ d/2 $ are very likely to occur in classical groups. Under some additional hypotheses, finding a pair of ppd-elements from $G$ allows us to conclude that $G$ contains a classical group. The proof of this relies on good estimates of the proportions of ppd-elements along with deep group theoretic analysis (depending on the simple group classification). In the long run, we wish to upgrade the Classical Recognition Algorithm in a threefold manner as described below. This paper takes a first step in this direction.

First, note that by \cite[Lemma 5.1]{MR1625479}, given a $\ppd(d,q;e)$-element $g$ with $e>d/2$, there exists a unique irreducible $e$-dimensional $ \mathbb{F}_q\langle g \rangle $-submodule of the natural $\mathbb{F}_q \GL(d,q)$-module. In particular, $ g $ is a $\fat(d,q;e)$-element. While every ppd-element is fat the converse implication is not true, as the presence of an $ e$-dimensional irreducible $\mathbb{F}_q\langle g\rangle$-submodule of the natural $\mathbb{F}_q \GL(d,q)$-module is not sufficient to guarantee  that $g$ is a $\ppd(d,q;e)$-element. For example in $\GL(3,3)$, an element of order $8$ is a $ \fat(3,3;2) $-element but not a  $\ppd(3,3;2)$-element since $3^2-1 = 8$ has no prime divisors which do not divide $ 3-1 =2$. However, even though fat elements do not necessarily need to be ppd-elements, most of them turn out to be. Our goal is to remove the restriction of looking for ppd-elements in the Classical Recognition Algorithm and evolve the algorithm into one based solely on elements with large irreducible submodules. Dropping the \ppd-property should result in an even better performance of the algorithm as in practice fatness can be tested more cheaply than the $\ppd$-property by finding an irreducible factor of degree greater than $d/2$ of the characteristic polynomial. The wish
to waive the ppd-property raises the following problem which we intend to address in further work.

\begin{problem}
	Describe all subgroups of $\GL(d,q) $ containing an irreducible $ \fat(d,q;e_1,e_2) $-pair for $ 1 < d/2 < e_1, e_2 \leq d $.
\end{problem}

As presented in \cite{MR1625479}, the Classical Recognition Algorithm takes as input a basis for the non-degenerate sesquilinear forms preserved by the subgroup $ G\leq \GL(d,q) $, as well as the knowledge that $G$ is irreducible on the underlying vector space. This requirement  is reasonable as efficient algorithms for testing irreducibility exist (namely the Meataxe algorithm due to Richard Parker \cite{MR760660} and the improved, general purpose version of it developed by Holt and Rees \cite{MR1279282}). Yet, we wish to develop a new (fat element based) recognition algorithm without the necessity to test for irreducibility. In order to evaluate how this move modifies the situation, Theorem~$\ref{theorem proportion red if fat}$ gives a good upper bound for the (conditional) probability of obtaining, on a single random selection from the set of fat pairs in $G \times G$ (where $\SL(d,q) \leq G \leq \GL(d,q)$), a reducible pair. We expect that similar bounds will hold if $\Omega(d,q) \leq G \leq N_{\GL(d,q)}(\Omega(d,q)) $ for any classical group $ \Omega(d,q) $.

Finally, by Theorem~$\ref{theorem reduction}$, if for a given matrix group $G\leq\GL(d,q)$ with $d\geq 3$, $G \times G $ contains a fat pair, then $G$ has a quotient $ H $ which is isomorphic to a matrix group of degree $ n > d/2$, such that $H \times H$ contains an irreducible fat pair. This suggests that recognition of groups containing classical groups could be generalised to test if a (reducible) subgroup of $\GL(d,q)$ has a large quotient containing $\SL(n, q)$ or an $ n $-dimensional symplectic, unitary or orthogonal group, with $n > d/2$.

% ------------------------------------------------------------------------

\section{Linear algebra preliminaries}\label{section linear algebra preliminaries}

Throughout this section let $q $ be a power of a prime, $d$ a non-negative integer, and $\s{V}$ a $d$-dimensional vector space defined over the finite field $\mathbb{F}_q$.

The proofs of Theorems~$\ref{theorem proportion ref and fat}$ and~$\ref{theorem proportion red if fat}$ involve counting certain subspaces in~$\s{V}$. As usual we denote the number of $w$-dimensional subspaces in $\s{V}$ (for $0 \leq w \leq d$) by so-called Gaussian coefficients (see for example \cite[p. $124$]{MR1311922}).

\begin{definition}\label{defintion gaussian coefficient}
	For a non-negative integer $w \leq d$, the \emph{Gaussian coefficient} $\binom{d}{w}_q$ is defined to be the number of $w$-dimensional subspaces in $\s{V}$.
\end{definition}

An explicit formula for $\binom{d}{w}_q$ is given for example in \cite[$(9.2.2)$]{MR1311922}.

\begin{lemma}\label{lemma gaussian coefficient}
	Let $w\leq d$ be a non-negative integer. Then
		\[\binom{d}{w}_q = \frac{ \prod_{i=d-w+1}^d (q^i - 1) }{ \prod_{i=1}^w (q^i-1)},\]
	and in particular $ \binom{d}{w}_q = \binom{d}{d-w}_q.$
\end{lemma}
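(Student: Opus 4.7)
The plan is to prove the formula by the classical double-counting of the set $T$ of ordered $w$-tuples of linearly independent vectors in $\s{V}$. This identity (and its symmetry) is a standard textbook result, but for a self-contained excerpt I would still run through the argument explicitly.

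First I would count $|T|$ by building the tuple one coordinate at a time. The vector $v_1$ can be any non-zero element, offering $q^d-1$ choices; once $v_1,\ldots,v_{i-1}$ are chosen, they span an $(i-1)$-dimensional subspace containing exactly $q^{i-1}$ vectors, so $v_i$ can be any of the remaining $q^d-q^{i-1}$ vectors. Hence
\[|T| \;=\; \prod_{i=1}^{w}(q^d - q^{i-1}).\]
Secondly, I would count $|T|$ by first choosing the $w$-dimensional subspace $\s{W}=\langle v_1,\ldots,v_w\rangle$ (there are $\binom{d}{w}_q$ choices by Definition~\ref{defintion gaussian coefficient}) and then choosing an ordered basis of $\s{W}$; the latter count, by the same argument applied inside $\s{W}$, is $\prod_{i=1}^{w}(q^w-q^{i-1})$. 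Equating the two expressions for $|T|$, pulling a factor $q^{i-1}$ out of each term in numerator and denominator, and cancelling the common power $q^{w(w-1)/2}$ yields
\[\binom{d}{w}_q \;=\; \frac{\prod_{i=1}^{w}(q^{d-i+1}-1)}{\prod_{i=1}^{w}(q^{w-i+1}-1)} \;=\; \frac{\prod_{j=d-w+1}^{d}(q^j-1)}{\prod_{j=1}^{w}(q^j-1)},\]
which is the claimed formula after reindexing.

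For the symmetry statement, I would multiply both numerator and denominator of this fraction by $\prod_{j=1}^{d-w}(q^j-1)$ to rewrite $\binom{d}{w}_q$ in the symmetric form $\prod_{j=1}^{d}(q^j-1) \big/ \bigl(\prod_{j=1}^{w}(q^j-1)\prod_{j=1}^{d-w}(q^j-1)\bigr)$, which is manifestly invariant under the interchange $w\leftrightarrow d-w$. The entire argument is essentially bookkeeping; there is no genuine obstacle, the only mild care being the index shift $i\mapsto d-i+1$ (respectively $w-i+1$) when converting the telescoping products into the stated form.
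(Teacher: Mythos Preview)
Your argument is correct and is the standard double-counting proof of the Gaussian coefficient formula; the index manipulations and the symmetry derivation are all fine.

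Note, however, that the paper does not actually prove this lemma: it simply states the formula and refers the reader to \cite[(9.2.2)]{MR1311922}. So there is no ``paper's own proof'' to compare against. Your write-up therefore goes beyond what the paper does, supplying a self-contained derivation where the authors were content with a citation. If your goal is to match the paper, a one-line reference suffices; if your goal is a self-contained exposition, what you have is exactly the right standard argument.
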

	
For a rational number $r$ let $\lceil r \rceil$ be the smallest integer which is at least~$r$.

\begin{lemma}\label{lemma sum over gaussian coeff inverse}
	If $d \geq 3$, then
		$\sum_{i=1}^{\lceil d/2 \rceil -1} \binom{d}{i}_q^{-1} < q^{-d+1}.$
\end{lemma}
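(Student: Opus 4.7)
The plan is to bound the Gaussian coefficients from below via the explicit formula in Lemma~\ref{lemma gaussian coefficient} and then control the resulting geometric-like sum. From $\binom{d}{i}_q = \prod_{j=1}^i (q^{d-i+j}-1)/(q^j-1)$ and the elementary inequality $(q^{d-i+j}-1)/(q^j-1) \geq q^{d-i}$ (valid since $d > i$), I obtain the standard estimate $\binom{d}{i}_q \geq q^{i(d-i)}$, and hence $\binom{d}{i}_q^{-1} \leq q^{-i(d-i)}$. Since the $i=1$ term on the right already equals $q^{-(d-1)}$, this crude bound alone is insufficient, so I would retain the exact value $\binom{d}{1}_q^{-1} = (q-1)/(q^d-1)$ and apply the geometric bound only for $i \geq 2$.

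The identity $q^{-(d-1)} - (q-1)/(q^d-1) = (1 - q^{-(d-1)})/(q^d-1)$ reduces the goal to showing that
\[\sum_{i=2}^{\lceil d/2 \rceil - 1} \binom{d}{i}_q^{-1} < \frac{1 - q^{-(d-1)}}{q^d-1}.\]
I would bound the left-hand side by $\sum_{i \geq 2} q^{-i(d-i)}$. Since $i(d-i)$ is strictly increasing in $i$ for $i \leq d/2$, this tail is dominated by its first term $q^{-2(d-2)}$, and the ratio of consecutive terms, equal to $q^{2i+1-d}$, is at most $q^{-2}$ throughout the summation range (whenever there is more than one term). Hence the tail is at most $q^{-2(d-2)} \cdot q^2/(q^2-1) \leq (4/3)\, q^{-2(d-2)}$. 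Comparing with the elementary lower bound $(1 - q^{-(d-1)})/(q^d-1) \geq (q^2-1)/q^{d+2}$, valid for $d \geq 3$, reduces the inequality to $(4/3)\, q^{6-d} < q^2 - 1$, which I would verify directly for all $d \geq 5$ and $q \geq 2$.

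The small cases $d \in \{3,4\}$ are handled separately: the sum then consists of only the $i = 1$ term, and $(q-1)/(q^d-1) < q^{-(d-1)}$ is equivalent to $q^{d-1} > 1$. The main obstacle will be the borderline case $(d,q) = (5,2)$, where all of the estimates are nearly tight; keeping $\binom{d}{1}_q^{-1}$ in exact form rather than settling for the coarser bound $q^{-(d-1)}$ is precisely what allows the geometric tail estimate to survive there.
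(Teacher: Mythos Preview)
Your proof is correct and complete; in particular the borderline check at $(d,q)=(5,2)$ goes through as you indicate, and the small cases $d\in\{3,4\}$ are handled cleanly.

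Your route, however, differs from the paper's. The paper does not use the pointwise estimate $\binom{d}{i}_q \geq q^{i(d-i)}$; instead it invokes unimodality of the Gaussian coefficients to bound every term with $i\geq 2$ by the single constant $\binom{d}{2}_q^{-1}$, obtaining
\[
q^{d-1}\sum_{i=1}^{\lceil d/2\rceil-1}\binom{d}{i}_q^{-1}
\;\leq\; q^{d-1}\binom{d}{1}_q^{-1} + (\lceil d/2\rceil-2)\,q^{d-1}\binom{d}{2}_q^{-1}
\;<\; \mu(d,q),
\]
and then shows $\mu(d,q)<1$ by an induction on $d$ (with base cases $d\leq 5$). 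Your argument trades that induction for a direct geometric-series estimate, made possible by the sharper decay $q^{-i(d-i)}$; this is arguably more transparent and yields the result without an inductive step. The paper's approach, on the other hand, needs only the crude monotonicity $\binom{d}{2}_q \leq \binom{d}{i}_q$ and a simple rational bound on $\binom{d}{1}_q$ and $\binom{d}{2}_q$. Both methods hinge on isolating the $i=1$ term exactly, which is the essential idea.
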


\begin{proof}
	Since for $i \in [1,d]$, $\binom{d}{i}_q$ is the number if $i$-dimensional subspaces in $\s{V}$, we have $ \binom{d}{2}_q < \binom{d}{i}_q $ for $ 2 < i \leq \lceil d/2 \rceil -1 $, and obtain
	\[\sum_{i=1}^{\lceil d/2 \rceil -1} q^{d-1} / \textstyle \binom{d}{i}_q 
			\leq q^{d-1} / \binom{d}{1}_q + (\lceil d/2 \rceil -2) q^{d-1} / \binom{d}{2}_q.\]
 Note that $q^{d-1} /\binom{d}{1}_q < 1-q^{-1} +q^{-d}$ and $q^{d-1}/\binom{d}{2}_q < q^{-d+3}$, whence
	\[\sum_{i=1}^{\lceil d/2 \rceil -1} q^{d-1}/ \textstyle\binom{d}{i}_q < 1-q^{-1}+q^{-d} + (\lceil d/2 \rceil -2) q^{-d+3} =: \mu(d,q).\] If $ d \in \{3,4\} $, then $ \mu(d,q) < 1 $. For $ d\geq 5 $ we use induction on $ d $ to show that $ \mu(d,q) < 1 $. Now, $ \mu(5,q) = 1-q^{-1}+q^{-5}+q^{-2} < 1$. Next, assuming $ \mu(d,q) < 1 $, we have
 	\begin{align*}
 		\mu(d+1,q)
 				& =  1 -q^{-1} + q^{-d-1} + (\lceil (d+1)/2 \rceil -2) q^{-d+2} \\
  				& < 1 -q^{-1} +q^{-d} + (\lceil d/2 \rceil - 1) q^{-d+2} +\\
  				& \quad  + \underbrace{(\lceil d/2 \rceil - 2) q^{-d+3} - (\lceil d/2 \rceil - 2) q^{-d+3}}_{=0}.
 	\end{align*}
By assumption, $ \mu(d,q) = 1 -q^{-1} +q^{-d} + (\lceil d/2 \rceil - 2) q^{-d+3} < 1 $, and thus
	\[\mu(d+1,q) < 1 + (\lceil d/2 \rceil - 1)q^{-d+2} - (\lceil d/2 \rceil - 2)q^{-d+3}.\]
Using $q \geq 2$ and $d \geq 5$,
 	\begin{align*}
 		\mu(d+1,q) 			
 			& < 1 + (\lceil d/2 \rceil - 1)q^{-d+2} - 2(\lceil d/2 \rceil - 2)q^{-d+2}
			\\ & = 1 - q^{-d+2} \left( \lceil d/2 \rceil -3\right)
 			\leq 1.
 	\end{align*}
We therefore have $\sum_{i=1}^{\lceil d/2 \rceil -1} q^{d-1} / \binom{d}{i}_q < \mu(d,q) < 1$, as asserted.
\end{proof}

% ------------------------------------------------------------------------

\section{Group theory preliminaries}\label{section group theory preliminaries}

In this section we assume that $d \geq 2$ is an integer, and $q$ is a power of a prime. Let $\s{V}$ be the natural $\mathbb{F}_q \GL(d,q)$-module, that is the vector space of $d$-dimensional row vectors over $\mathbb{F}_q$ on which $\GL(d, q)$ acts naturally.

For $G \leq \GL(d,q)$ and a subspace $\s{W} \leq \s{V}$, we denote by $G_\s{W}$ the subgroup of $G$ which leaves $\s{W}$ invariant, that is $G_\s{W} = \{g \in G \mid \s{W}g = \s{W} \}$.
Using an argument very similar to \cite[proof of Theorem 4.1]{MR1189139} we obtain

\begin{lemma}\label{lemma G_W is transitive on U}
	Let $e, w \in [0,d]$ be integers, and $\s{W} \leq \s{V}$ of dimension~$w$.
	\begin{enumerate}[$(a)$]
		\item If $e +w \leq d$, then $\SL(d,q)_\s{W}$ acts transitively on the set of all $e$-dimensional subspaces $\s{U} \leq \s{V}$ such that $\s{U} \cap \s{W} = \{0\}$.
		\item If $e \leq w \leq d$, then $\SL(d,q)_\s{W}$ acts transitively on the set of all $e$-dimensional subspaces $\s{U} \leq \s{W}$.
	\end{enumerate}
	In particular, $SL(d,q)$ is transitive on the all $e$-dimensional subspaces in~$\s{V}$.
\end{lemma}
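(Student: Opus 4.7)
The plan is to prove both parts by a single basis-extension argument, and then read off the final assertion as the special case $\s{W}=\{0\}$ of part (a), for which $\SL(d,q)_\s{W}=\SL(d,q)$ and the intersection condition is vacuous. Before the main argument I would dispose of the degenerate cases in which only one candidate subspace exists: in (a) this happens when $e=0$, and in (b) when $e\in\{0,w\}$; in each case transitivity is automatic.

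For part (a), with $1\le e$ and $e+w\le d$, I fix any basis $w_1,\dots,w_w$ of $\s{W}$. Given two $e$-dimensional subspaces $\s{U}_1,\s{U}_2$ with $\s{U}_i\cap\s{W}=\{0\}$, I choose a basis $u_1^{(i)},\dots,u_e^{(i)}$ of $\s{U}_i$ and, using the hypothesis $e+w\le d$, extend $w_1,\dots,w_w,u_1^{(i)},\dots,u_e^{(i)}$ to a basis $\s{B}_i$ of $\s{V}$. The linear map $g$ sending $\s{B}_1$ to $\s{B}_2$ pointwise is in $\GL(d,q)$, fixes each $w_k$ (so $\s{W} g=\s{W}$), and maps $\s{U}_1$ onto $\s{U}_2$. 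To promote $g$ to $\SL(d,q)_\s{W}$, I replace $u_e^{(2)}$ by $\det(g)^{-1}u_e^{(2)}$ and redefine $g$ accordingly; this changes neither $\s{W}$ nor the subspace $\s{U}_2$, but rescales the determinant to~$1$.

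For part (b), with $1\le e<w$, the argument is entirely parallel: I choose bases of $\s{U}_1,\s{U}_2$ inside $\s{W}$, extend each in turn to a basis of $\s{W}$, and then to a basis of $\s{V}$ by adjoining the same fixed complement vectors to $\s{W}$. The change-of-basis map preserves $\s{W}$ and carries $\s{U}_1$ to $\s{U}_2$; the same determinant adjustment (rescaling $u_e^{(2)}\in\s{W}$) yields an element of $\SL(d,q)_\s{W}$. Finally, the in-particular statement is immediate from (a) with $\s{W}=\{0\}$.

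No step is really hard; the only mildly delicate point, which I would flag as the main obstacle, is ensuring $\det=1$ without disturbing either $\s{W}$ or the target subspace. Rescaling a single basis vector of $\s{U}_2$ resolves this in both parts, because the vector lies inside $\s{U}_2$ (preserving it as a set) and either outside $\s{W}$ (case (a)) or inside $\s{W}$ (case (b)), so $\s{W}$ remains invariant. This uses $e\ge 1$, which is why the $e=0$ case is handled separately at the outset.
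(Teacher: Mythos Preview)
Your argument is correct. The paper itself does not supply a proof of this lemma; it simply records that the statement follows ``using an argument very similar to \cite[proof of Theorem~4.1]{MR1189139}'' and moves on. Your basis-extension argument is exactly the standard proof one would expect behind that citation, and you have handled the one genuinely non-automatic point---forcing determinant~$1$ while preserving both $\s{W}$ and the target subspace~$\s{U}_2$---cleanly by rescaling a basis vector of $\s{U}_2$, together with the necessary separation of the degenerate cases $e=0$ (and $e=w$ in part~(b)) where no such vector is available. So your write-up in fact supplies what the paper omits.
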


As specified in the introduction, we call an element $g \in \GL(d,q)$ a $\fat(d,q;e)$-element, if $\s{V}$ has an irreducible $\mathbb{F}_q \langle g \rangle$-submodule of dimension $e > d/2$. In the remainder of this section we shall be concerned with the proportions of $\fat(d,q;e)$-elements in (maximal parabolic subgroups of) $G$, where $G$ satisfies $\SL(d,q) \leq G \leq \GL(d,q)$.

\begin{definition}\label{definition proportion of fat elements}
	For an integer $e \in (d/2,d]$ and $G \leq \GL(d,q)$, define $\fat(G;e)$ to be the proportion of $\fat(d,q;e)$-elements in~$G$. Set $\fat(e) := \fat(\GL(e,q);e)$.	
\end{definition}

\begin{lemma}\label{lemma proportion of fat elements}
	For an integer $e \geq 2$ we have $1/(e+1) \leq \fat(e) < 1/e$.
\end{lemma}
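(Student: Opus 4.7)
The plan is to give a closed form for $\fat(e)$ and then derive both inequalities from a single divisor-sum identity. First, a $\fat(e,q;e)$-element of $\GL(e,q)$ is precisely an element acting irreducibly on the natural module $\mathbb{F}_q^e$, equivalently an element whose characteristic polynomial is irreducible of degree $e$. For such $g$, the subring $\mathbb{F}_q[g]\subseteq M_e(\mathbb{F}_q)$ is a field isomorphic to $\mathbb{F}_{q^e}$, so the centralizer of $g$ in $\GL(e,q)$ is $\mathbb{F}_q[g]^{*}\cong\mathbb{F}_{q^e}^{*}$, of order $q^e-1$, and two such elements are $\GL(e,q)$-conjugate if and only if their characteristic polynomials coincide. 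Writing $N_e$ for the number of monic irreducible polynomials of degree $e$ over $\mathbb{F}_q$, these observations give
\[
  \fat(e) \;=\; \frac{N_e}{q^e-1}.
\]

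Next I would invoke the classical identity $\sum_{d\mid e} dN_d = q^e$, obtained by factoring $x^{q^e}-x$ into all monic irreducibles over $\mathbb{F}_q$ whose degree divides $e$ and equating degrees. This rearranges to $eN_e = q^e - \sum_{d\mid e,\,d<e} dN_d$. Since $1$ is always a proper divisor and $N_1=q$, we immediately obtain $eN_e\leq q^e-q$, and therefore
\[
  \fat(e) \;\leq\; \frac{q^e-q}{e(q^e-1)} \;<\; \frac{1}{e}
\]
because $q\geq 2$ forces $q^e-q<q^e-1$. This proves the upper bound. A short rearrangement shows that the lower bound $\fat(e)\geq 1/(e+1)$ is equivalent to the assertion $\sum_{d\mid e,\,d<e} dN_d \leq N_e+1$.

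To prove this last inequality I would estimate each summand using $dN_d\leq q^d-q$ for $d\geq 2$ (applying the identity recursively, since $1\cdot N_1=q$ is exact), so that the dominant contribution $(e/p)N_{e/p}\leq q^{e/p}$ comes from the largest proper divisor $e/p$ of $e$ (where $p$ is the smallest prime factor of $e$) and all remaining divisors are bounded by $e/p^2$. In the prime-power case $e=p^k$ the telescoping is exact and gives $\sum_{d\mid e,\,d<e} dN_d = q^{e/p}$; the required inequality reduces to $y^p - (p^k+1)y + p^k \geq 0$ for $y=q^{e/p}$, which factors as $(y-1)(y^{p-1}+\cdots+y-p^k)\geq 0$, nonnegative because $y^{p-1}+\cdots+y\geq (p-1)y\geq (p-1)\cdot 2^{p^{k-1}}\geq p^k$ for all $p\geq 2$ and $k\geq 1$.

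The main obstacle will be the composite case $\omega(e)\geq 2$, because the lower bound is \emph{tight}: direct calculation shows $\fat(e)=1/(e+1)$ exactly at $(e,q)\in\{(2,2),(4,2),(6,2)\}$, so no crude estimate can succeed at these points. For instance, at $(6,2)$ discarding the negative Möbius term $-q$ from $\sum_{d\mid e}\mu(e/d)q^d$ would already break the inequality. A correct argument must therefore retain the cancellations coming from subfield intersections $\mathbb{F}_{q^{e/(p_ip_j)}}$, which provide precisely the slack needed; I would handle this either by induction on $\omega(e)$ or by an explicit analysis tracking the first two leading Möbius contributions.
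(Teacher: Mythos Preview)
Your approach differs from the paper's. The paper does not argue from the counting formula for irreducible polynomials at all: it simply cites \cite[Lemma~2.3]{MR1182102} for the lower bound when $e\geq 3$, extracts from that source the expression $\fat(e)=\lvert C_0\rvert/(e\lvert C\rvert)$ with $\lvert C\rvert=q^e-1$ and $C_0\subsetneq C$ (equivalent to your $\fat(e)=N_e/(q^e-1)$, since $\lvert C_0\rvert=eN_e$) to read off the upper bound, and handles $e=2$ by a two-line computation. Your route is more self-contained: the derivation of $\fat(e)=N_e/(q^e-1)$, the upper bound via $eN_e\leq q^e-q$, and the prime-power lower bound via the factorisation of $y^p-(e+1)y+e$ are all correct and pleasant. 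What you gain is independence from \cite{MR1182102}; what the paper gains is brevity.

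The genuine gap is the composite case of the lower bound. You have correctly reduced $\fat(e)\geq 1/(e+1)$ to $(e+1)N_e\geq q^e-1$ and observed that equality holds at $(e,q)=(6,2)$, so nothing can be thrown away. But neither of your proposed remedies is carried out. ``Induction on $\omega(e)$'' is not obviously well-founded here, since the inductive step would compare $N_e$ to $N_{e/p}$ for several primes $p$ simultaneously, and the tight case $(6,2)$ shows there is no slack to absorb a lossy step. ``Tracking the first two M\"obius contributions'' is closer to what actually works: from $eN_e=\sum_{d\mid e}\mu(e/d)q^d$ one gets $eN_e\geq q^e-\sum_{p\mid e}q^{e/p}$, and the task becomes showing $(e+1)\bigl(q^e-\sum_{p\mid e}q^{e/p}\bigr)\geq e(q^e-1)$, i.e.\ $q^e+e\geq (e+1)\sum_{p\mid e}q^{e/p}$. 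For $e\geq 6$ and $q\geq 3$, or $e\geq 8$ and $q=2$, the dominant term $q^{e/2}$ already makes this routine; the residual cases $e=6,q=2$ (where the discarded $+q$ from $\mu(6)$ is exactly what closes the gap) and any small composites must be checked individually. Until you write that argument down, the proposal is incomplete precisely at the point where the cited lemma in \cite{MR1182102} does the work.
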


\begin{proof}
	For $e \geq 3$, the lower bound is given in \cite[Lemma 2.3]{MR1182102}. From the proof of the same lemma it follows that for all $e \geq 2$ we have $\fat(e) = \vert C_0 \vert / (e \vert C \vert)$, where $C_0$ is a proper subset of $C \leq \GL(e,q)$ with
		\[\vert C \vert = q^e-1, \quad \vert C \vert - \sum_{f \mid e \text{ proper}} (q^f-1) \leq \vert C_0 \vert < \vert C \vert.\]
	For $e = 2$ we thus get (using $q \geq 2$)
		\[\fat(2) = \frac{\vert C_0 \vert}{e \vert C \vert} \geq \frac{q^2-1 - (q-1)}{2 (q^2 - 1)} = \frac{q}{2(q+1)} \geq \frac{2}{2 \cdot 3} = \frac{1}{3},\]
	as required. Since $\vert C_0 \vert / \vert C \vert < 1$, the upper bound follows (for all $e \geq 2$).	
\end{proof}

The proof of the following lemma is based on \cite[proof of Lemma 5.4]{MR1625479}.

\begin{lemma}\label{lemma fat(G,e) = fat(e) = ...}
	Let $G$ be a group satisfying $\SL(d,q) \leq G \leq \GL(d,q)$, and let $\s{W} \leq \s{V}$. Let $e$ be an integer such that $e \in (d/2,d)$. Then, $G_\s{W}$ contains a $\fat(d,q;e)$-element if and only if $\dim(\s{W}) \in [0,d-e] \cup [e,d]$, and in this case
		\[\fat(G_\s{W}; e) = \fat(e).\]
	In particular, $\fat(G;e) = \fat(e)$.
\end{lemma}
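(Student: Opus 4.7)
The plan splits into two parts: first, pin down the permitted dimensions of $\s{W}$ using uniqueness of the large irreducible submodule, and then count fat elements of $G_\s{W}$ by a double-count over pairs $(g,\s{U})$ with $g$ fat and $\s{U}_g=\s{U}$.

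For the first part, observe that any $\fat(d,q;e)$-element $g\in\GL(d,q)$ has a \emph{unique} irreducible $e$-dimensional $\mathbb{F}_q\langle g\rangle$-submodule $\s{U}_g$: two distinct such submodules would intersect in a proper $\langle g\rangle$-submodule of each, hence in $\{0\}$ by irreducibility, forcing $\dim(\s{U}_g+\s{U}_g')=2e>d$, a contradiction. Now if $g\in G_\s{W}$ then $\s{W}\cap\s{U}_g$ is a $\langle g\rangle$-submodule of $\s{U}_g$, so it equals $\{0\}$ (whence $\dim\s{W}\le d-e$) or $\s{U}_g$ (whence $\dim\s{W}\ge e$). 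This establishes the ``only if'' direction.

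For the converse and the proportion, write $w=\dim\s{W}$ and let $\mathcal{S}$ be the set of $e$-dimensional subspaces $\s{U}\le\s{V}$ with the appropriate relation to $\s{W}$, namely $\s{U}\cap\s{W}=\{0\}$ when $w\le d-e$ and $\s{U}\le\s{W}$ when $w\ge e$. Lemma \ref{lemma G_W is transitive on U}, combined with $\SL(d,q)\le G$, shows that $G_\s{W}$ is transitive on $\mathcal{S}$; hence $|\mathcal{S}|=[G_\s{W}:G_{\s{U},\s{W}}]$ and $N:=|\{g\in G_{\s{U},\s{W}}:\s{U}_g=\s{U}\}|$ is independent of the choice of $\s{U}\in\mathcal{S}$. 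Summing over $\s{U}\in\mathcal{S}$ and noting that each fat $g\in G_\s{W}$ is counted exactly once (by uniqueness of $\s{U}_g$), the total number of fat elements of $G_\s{W}$ equals $|\mathcal{S}|\cdot N$.

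It remains to compute $N$. Choosing a basis of $\s{V}$ adapted to the flag determined by $\s{U}$ and $\s{W}$, any $g\in\GL(d,q)_{\s{U},\s{W}}$ becomes block lower triangular with diagonal blocks $A\in\GL(e,q)$ (acting on $\s{U}$), $B$, $C$ on the other pieces, and free off-diagonal blocks. The condition $\s{U}_g=\s{U}$ is equivalent to $A$ acting irreducibly on $\s{U}$, which by definition occurs for exactly $\fat(e)|\GL(e,q)|$ choices of $A$. The main obstacle is the determinant constraint from $\SL(d,q)\le G\le\GL(d,q)$: since $G=\det^{-1}(\det G)$, I would verify that $\det G_{\s{U},\s{W}}=\det G$ with fibres all of size $|\SL(d,q)_{\s{U},\s{W}}|$, using diagonal elements $\mathrm{diag}(\lambda,1,\ldots,1)$ in $\GL(d,q)_{\s{U},\s{W}}$, and that within each $\det$-fibre the proportion with $\s{U}_g=\s{U}$ remains $\fat(e)$ because $B,C$ may be rescaled to absorb any change in $\det A$. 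This yields $N=\fat(e)|G_{\s{U},\s{W}}|$, so the number of fat elements of $G_\s{W}$ is $|\mathcal{S}|\cdot N=\fat(e)|G_\s{W}|$, giving $\fat(G_\s{W};e)=\fat(e)$. The positivity $\fat(e)>0$ from Lemma \ref{lemma proportion of fat elements} supplies the ``if'' direction, and setting $\s{W}=\{0\}$ gives $\fat(G;e)=\fat(e)$.
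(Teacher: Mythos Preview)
Your proof is correct and follows essentially the same route as the paper: both establish the dimension constraint via uniqueness of $\s{U}_g$, use transitivity of $G_\s{W}$ on the relevant set of $e$-spaces to reduce to $G_{\s{U},\s{W}}$, and then show the proportion there equals $\fat(e)$. The only difference is cosmetic: where the paper packages the last step as ``the representation $\mathfrak{X}:H_\s{U}\to\GL(e,q)$ is surjective (since $e<d$) and fatness is constant on $\ker(\mathfrak{X})$-cosets'', you unpack the same map as $g\mapsto A$ in block-matrix form and verify surjectivity and equal fibre sizes by hand via the determinant/rescaling argument, which is exactly what underlies the paper's unproved assertion $\mathfrak{X}(H_\s{U})\cong\GL(e,q)$.
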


\begin{proof}
	We set $H := G_\s{W}$. If $\dim(\s{W}) \in [0,d-e] \cup [e,d]$, then it is easy to verify that $H$ contains a $\fat(d,q;e)$-element.

	Conversely, suppose that $H$ contains a $\fat(d,q;e)$-element $g$, and let $\s{U}$ be the irreducible $\mathbb{F}_q \langle g\rangle$-submodule of $\s{V}$ with $\dim(\s{U}) = e$. 	Note that $\s{U}$ is uniquely determined, as it is irreducible and of dimension $e > d/2$. The intersection $\s{U} \cap \s{W}$ is an $\mathbb{F}_q\langle g \rangle$-submodule of $\s{U}$. Hence $\s{U} \cap \s{W} \in \{\{0\},\s{U}\}$, and in particular $\dim(\s{W}) \leq d-e$ or $\dim(\s{W}) \geq e$.
	Recall from Lemma~$\ref{lemma G_W is transitive on U}$ that $H$ acts transitively on the set $\mathbf{U}$, where
		\[\mathbf{U} := \begin{cases}
										\{\s{U}' \leq \s{V} \mid \dim(\s{U}') = e, \, \s{U}' \cap \s{W} = \{0\} \}, & \text{if } \dim(\s{W}) \leq d-e,\\
										\{\s{U}' \leq \s{V} \mid \dim(\s{U}') = e, \, \s{U}' \leq \s{W} \}, & \text{if } \dim(\s{W}) \geq e. 
									\end{cases}\]
	Since $\s{U} \in \mathbf{U}$, by the orbit stabiliser theorem $\vert \mathbf{U} \vert = \vert H : H_\s{U} \vert$. Thus, the number of $\fat(d,q;e)$-elements in $H$ equals $\vert H : H_\s{U} \vert$ times the number of $\fat(d,q;e)$-elements in $H_\s{U}$, that is
	$\fat(H;e) \vert H \vert =  \vert H : H_\s{U} \vert \fat( H_\s{U}; e ) \vert H_\s{U} \vert$, whence $\fat(H;e) = \fat( H_\s{U}; e)$.
	
	Let $\mathfrak{X}: H_\s{U} \rightarrow \GL(e,q)$ be the representation afforded by $\s{U}$ as an $\mathbb{F}_q H_\s{U}$-submodule of~$\s{V}$. Let $ \ker(\mathfrak{X}) $ be the kernel of $ \mathfrak{X}$. If for $ g \in H_\s{U} $ the coset $\ker(\mathfrak{X})g $ contains a $ \fat(d,q; e) $-element, then every element of $ \ker(\mathfrak{X})g $ is a $ \fat(d,q; e) $-element.
	It follows that the number of $ \fat(d,q;e) $-elements in $H_\s{U}$ equals $ \vert \ker(\mathfrak{X}) \vert $ times the number of $ \fat(e,q;e) $-elements in $ \mathfrak{X}(H_\s{U} )$, that is
	$\fat(H_\s{U}; e) \vert  H_\s{U} \vert = \vert \ker(\mathfrak{X}) \vert \fat( \mathfrak{X}( H_\s{U}) ;e) \vert \mathfrak{X}( H_\s{U} ) \vert$.
	Then, using $\vert H_\s{U} \vert = \vert \ker(\mathfrak{X}) \vert \vert \mathfrak{X}(H_\s{U}) \vert$, we get $\fat(H_\s{U}; e) = \fat(\mathfrak{X}(H_\s{U});e) $.
	
	Finally, since $e < d$, we have $\mathfrak{X} (H_\s{U}) \cong \GL(e,q),$ and thus $\fat(H_\s{U}; e) = \fat(e)$.
	This proves the assertion, as $\fat(H;e) = \fat(H_\s{U};e) = \fat(e)$.
	
	By setting $\s{W} := \{0\}$ we obtain that $\fat(G;e) = \fat(e)$.
\end{proof}

% ------------------------------------------------------------------------

\section{Proofs of main results}

Throughout this section let $d \geq 3$ be a positive integer, $\mathbb{F}_{q}$ a finite field of order $q$ for some prime power $q$, and $ \s{V} $ the natural $ \mathbb{F}_q \GL(d,q) $-module.

\subsection{Proof of Theorem~\ref{theorem reduction}}\label{subsection proof of Theorem reduction}

If $(g_1,g_2) \in \GL(d,q) \times \GL(d,q) $ is a $\fat(d,q;e_1,e_2)$-pair for some integers $e_1,e_2 > d/2$,
%that is for $ i = 1,2 $, $ g_i $ is a $ \fat(d,q;e_i)$-element,
then by definition $ g_i $ determines an (uniquely determined) $ e_i$-dimensional irreducible $ \mathbb{F}_q \langle g_i \rangle $-submodule $\s{U}_i$ of $\s{V} $ $(i=1,2).$ 
%Recall that $ \s{U}_i $ is uniquely determined by $ g_i $ for $ i = 1,2 $, since $ \s{U}_i $ is irreducible and $ \dim(\s{U}_i) > d/2 $.
In addition, there may or may not exist a proper and non-trivial $\mathbb{F}_q \langle g_1, g_2 \rangle $-submodule $ \s{W} $ of $ \s{V}$ according as $ (g_1, g_2) $ is reducible or not. The following lemma presents a basic, yet critical property of $\s{W} $ in such a setting. Note that, if $ \max\{e_1,e_2\} = d$, then $(g_1, g_2)$ is irreducible. Hence, in order that $\s{W}$ exists, we assume that each $e_i < d$. We write $\langle \s{U}_1,\s{U}_2\rangle_{\mathbb{F}_q \langle g_1, g_2\rangle}$ for the intersection of all $\mathbb{F}_q \langle g_1,g_2 \rangle$-submodules in $\s{V}$ which contain $\s{U}_1$ and $\s{U}_2$.

\begin{lemma}\label{lemma cases for W}
	Let $ e_1, e_2 \in \mathbb{N}$ with $1<d/2 < e_1,e_2 < d $, and let $ (g_1, g_2) $ be a reducible $ \fat(d,q; e_1,e_2) $-pair in $ \GL(d,q) \times \GL(d,q)$. For $ i = 1,2 $ let $ \s{U}_i $ denote the irreducible $ \mathbb{F}_q \langle g_i \rangle $-submodule of $ \s{V} $ of dimension $ e_i $, and let $ \s{W} \not\in \{ \{0\}, \s{V}\} $ be a $ \mathbb{F}_q \langle g_1, g_2 \rangle $-submodule of $ \s{V} $. Then exactly one of the following holds:
	\begin{enumerate}[$(a)$]
		\item $ \s{W} \cap \s{U}_i = \{ 0 \} $, and $ 1 \leq \dim(\s{W}) \leq d - \max\{e_1, e_2\} $, or
		\item $\langle \s{U}_1,\s{U}_2\rangle_{\mathbb{F}_q \langle g_1, g_2\rangle} \leq \s{W}$ and
		$ \max\{e_1, e_2\} \leq \dim(\s{W}) \leq d-1 $.
	\end{enumerate}	
	In particular, $\dim(\s{W}) \in [1, d - \max\{e_1, e_2\}] \cup [\max\{e_1, e_2\}, d-1] $.
\end{lemma}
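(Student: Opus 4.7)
The plan is to exploit the uniqueness of the large irreducible summand $\s{U}_i$ together with the fact that $\s{W}\cap \s{U}_i$ is a $\langle g_i\rangle$-submodule of the irreducible $\langle g_i\rangle$-module $\s{U}_i$.

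\medskip
\noindent\textbf{Step 1 (dichotomy for each intersection).} Fix $i\in\{1,2\}$. Since $g_i$ preserves both $\s{W}$ and $\s{U}_i$, the intersection $\s{W}\cap\s{U}_i$ is an $\mathbb{F}_q\langle g_i\rangle$-submodule of $\s{U}_i$. Irreducibility of $\s{U}_i$ forces
$\s{W}\cap\s{U}_i\in\{\{0\},\s{U}_i\}$. So for each $i$ we are in one of the two regimes ``trivial intersection'' or ``$\s{U}_i\subseteq\s{W}$''.

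\medskip
\noindent\textbf{Step 2 (no mixed case).} I would rule out the mixed configurations. Suppose $\s{W}\cap\s{U}_1=\{0\}$ while $\s{U}_2\subseteq\s{W}$. Then $\s{U}_2\subseteq\s{W}$ yields $\dim(\s{W})\geq e_2>d/2$, while $\s{W}\cap\s{U}_1=\{0\}$ gives $\dim(\s{W})+e_1\leq d$, hence $\dim(\s{W})\leq d-e_1<d/2$. This is a contradiction, and swapping the roles of $1$ and $2$ rules out the other mixed case. So either both intersections are trivial, or both $\s{U}_i\subseteq\s{W}$.

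\medskip
\noindent\textbf{Step 3 (the two cases).} If both $\s{W}\cap\s{U}_i=\{0\}$, then $\dim(\s{W})+e_i\leq d$ for each $i$, so $\dim(\s{W})\leq d-\max\{e_1,e_2\}$, while $\s{W}\neq\{0\}$ gives $\dim(\s{W})\geq 1$; this is case~(a). If instead $\s{U}_1,\s{U}_2\subseteq\s{W}$, then $\s{W}$ is an $\mathbb{F}_q\langle g_1,g_2\rangle$-submodule containing both $\s{U}_1$ and $\s{U}_2$, so by the very definition of $\langle \s{U}_1,\s{U}_2\rangle_{\mathbb{F}_q\langle g_1,g_2\rangle}$ as the intersection of all such submodules, we obtain $\langle \s{U}_1,\s{U}_2\rangle_{\mathbb{F}_q\langle g_1,g_2\rangle}\leq\s{W}$. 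Since $\s{U}_i\leq \s{W}$ we have $\dim(\s{W})\geq\max\{e_1,e_2\}$, and $\s{W}\neq\s{V}$ gives $\dim(\s{W})\leq d-1$; this is case~(b).

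\medskip
\noindent\textbf{Step 4 (``exactly one'').} The ranges $[1,d-\max\{e_1,e_2\}]$ and $[\max\{e_1,e_2\},d-1]$ are disjoint, because $d-\max\{e_1,e_2\}<d/2<\max\{e_1,e_2\}$ by hypothesis. Hence cases~(a) and~(b) are mutually exclusive, and the final ``in particular'' assertion on $\dim(\s{W})$ follows immediately. There is no real obstacle here; the only subtlety is the dimension bookkeeping in Step~2 that forces the non-mixed behaviour, and this relies precisely on the hypothesis $e_1,e_2>d/2$.
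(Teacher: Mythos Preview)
Your proof is correct and follows essentially the same route as the paper's: both use irreducibility of $\s{U}_i$ to force $\s{W}\cap\s{U}_i\in\{\{0\},\s{U}_i\}$, rule out the mixed case by dimension counting (the paper phrases this as forcing $\s{U}_1\cap\s{U}_2=\{0\}$, you bound $\dim(\s{W})$ directly, but both hinge on $e_i>d/2$), and then read off cases~(a) and~(b). Your explicit Step~4 verifying mutual exclusivity is a small addition the paper leaves implicit.
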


\begin{proof}
	For $ i = 1,2 $ the intersection $\s{W} \cap \,\s{U}_i$ is an $ \mathbb{F}_q \langle g_i \rangle $-submodule of $ \s{U}_i $. Since $ \s{U}_i $ is irreducible it follows that $\s{W}\cap\s{U}_i$ is trivial or non-proper. Suppose that for some $ i \in \{1,2\} $, $ \s{W} \cap \s{U}_i = \{0\} $ and $ \s{W} \cap \s{U}_{3-i} = \s{U}_{3-i} $.	 Then $\s{U}_1\cap\s{U}_2=\{0\}$ which contradicts $\dim(\s{U}_i) = e_i > d/2$. Thus either $ \s{W} \cap \s{U}_i = \{0\} $ for $ i = 1,2 $, or $ \s{W} \cap \s{U}_i = \s{U}_i $ for $ i = 1,2 $. In the first case, $ 1 \leq \dim(\s{W}) \leq d - \max\{e_1,e_2\} $ and (a) holds. In the second case, $ \max\{e_1,e_2\} \leq \dim(\s{W}) \leq d-1 $, and as each $ \s{U}_i \leq \s{W}$, also $\langle \s{U}_1,\s{U}_2\rangle_{\mathbb{F}_q \langle g_1, g_2 \rangle} \leq~\s{W}$, so (b) holds.
\end{proof}

\begin{proof}[Proof of Theorem~$\ref{theorem reduction}$]	
	For $i=1,2$, let $\s{U}_i$ denote the $\mathbb{F}_q \langle g_i \rangle$-submodule of $\s{V}$ with $\dim(\s{U}_i)= e_i$. Let $ \s{X} :=\langle \s{U}_1, \s{U}_2 \rangle_{\mathbb{F}_q \langle g_1, g_2 \rangle} $, and let $\s{Y} $ be an $ \mathbb{F}_q \langle g_1, g_2\rangle $-submodule of $\s{X}$ maximal by inclusion with respect to the property $ \s{U}_1 \cap \s{Y} = \s{U}_2 \cap \s{Y} = \{0\} $. Define $ \s{N} = \s{X}/\s{Y}$.
	For $ i=1,2 $, $ \s{U}_i \cong (\s{U}_i \oplus \s{Y}) \bigl/ \s{Y} \leq \s{X} / \s{Y} $ can be viewed as a submodule of $ \s{N} $. It follows that $\dim(\s{N}) \geq \max\{e_1, e_2\}$, and that the pair $ (\overline{g_1}, \overline{g_2})$ induced by $ (g_1, g_2) $ on $ \s{N} \times \s{N} $ is a $ \fat(n,q;e_1,e_2) $-pair.
    
	It remains to prove that $ (\overline{g_1}, \overline{g_2})$ is irreducible, that is $ \s{N} $ is an $ \mathbb{F}_q \langle g_1, g_2\rangle $-composition factor of $ \s{V} $. We do this by showing that $\s{Y} $ is a maximal $\mathbb{F}_q \langle g_1, g_2 \rangle $-submodule of $ \s{X}$. Suppose that there exists an $\mathbb{F}_q \langle g_1, g_2\rangle $-module $\s{W} $ satisfying $\s{Y} < \s{W} < \s{X}$. By Lemma~$\ref{lemma cases for W}$, we either have $ \s{W}\cap \s{U}_i
	  = \{0\}$ for $ i = 1,2 $, or $ \langle \s{U}_1, \s{U}_2\rangle_{\mathbb{F}_q \langle g_1, g_2 \rangle} \leq \s{W}$. Since $ \s{X} = \langle \s{U}_1, \s{U}_2 \rangle_{\mathbb{F}_q \langle g_1, g_2 \rangle} $ and $\s{X} \not\leq \s{W} $, the latter case cannot occur. Hence, $ \s{W} $ is a proper $ \mathbb{F}_q \langle g_1, g_2\rangle $-submodule of $ \s{X} $ that satisfies $ \s{W}\cap\s{U}_i = \{0\} $ and properly contains $\s{Y} $. This, however, is not true as we have chosen $ \s{Y} $ to be maximal with respect to this property.
\end{proof}

%-----------------------------------------------------------------------

\subsection{Proof of Theorem~\ref{theorem proportion ref and fat}}\label{subsection proof of Theorem red and fat}

Given a group~$ G $, which satisfies $ \SL(d,q) \leq G \leq \GL(d,q) $, we wish to find a good upper bound for the proportion $ \redfat(G) $ of reducible fat pairs in $ G \times G$. As a first step, we consider the proportion of reducible fat pairs relative to some fixed parameters $e_1,e_2 >d/2$.

\begin{definition}\label{definition redfat(G,e1,e2)}
	For a group $G$ such that $\SL(d,q) \leq G \leq \GL(d,q) $, and integers $e_1, e_2 \in (d/2,d]$ we define $\redfat(G;e_1,e_2)$ to be the proportion of reducible $\fat(d,q;e_1,e_2)$-pairs in the set of all pairs in $G \times G$.
\end{definition}
 
\begin{lemma}\label{lemma proportion}
	Let $ e_1,e_2 \in \mathbb{N} $ such that $d/2 < e_1,e_2 < d $, and let $ G $ be a group satisfying $\SL(d,q)\leq G\leq\GL(d,q) $. Then
	\begin{align*}
		\redfat(G;e_1,e_2) < 2 \, \fat(e_1) \, \fat(e_2) \, q^{-d+1}  < 2/(e_1e_2)q^{-d+1}. %\leq \frac{8}{(d+1)^2} q^{-d+1}.
	\end{align*}
\end{lemma}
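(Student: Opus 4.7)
The plan is to exploit that any reducible fat pair $(g_1,g_2)\in G\times G$ lies in a common stabiliser $G_\s{W}$ for some proper non-trivial $\mathbb{F}_q\langle g_1,g_2\rangle$-invariant $\s{W}\leq\s{V}$, and to apply a union bound over such $\s{W}$. Setting $e^*:=\max\{e_1,e_2\}$, Lemma~\ref{lemma cases for W} restricts $w:=\dim(\s{W})$ to the range $[1,d-e^*]\cup[e^*,d-1]$. Counting each reducible fat pair once for every witness $\s{W}$ of the allowed dimension gives
\[
\#\{\text{reducible fat pairs in }G\times G\} \leq \sum_{\s{W}} \#\{(g_1,g_2)\in G_\s{W}\times G_\s{W} : g_i \text{ is } \fat(d,q;e_i)\}.
\]

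Next I would evaluate the summand using Lemma~\ref{lemma fat(G,e) = fat(e) = ...}. Under the dimension constraint just obtained, either $w\leq d-e^*\leq d-e_i$ for both $i$, or $w\geq e^*\geq e_i$ for both $i$; in either case the hypothesis of Lemma~\ref{lemma fat(G,e) = fat(e) = ...} is satisfied for each $e_i$, so $\fat(G_\s{W};e_i)=\fat(e_i)$. Hence the number of fat pairs in $G_\s{W}\times G_\s{W}$ equals $\fat(e_1)\fat(e_2)\,|G_\s{W}|^2$. Since $\SL(d,q)\leq G$ is transitive on the set of $w$-dimensional subspaces (Lemma~\ref{lemma G_W is transitive on U}), we have $|G_\s{W}|=|G|/\binom{d}{w}_q$, and summing $\binom{d}{w}_q$ copies of the summand for each $w$ and dividing by $|G|^2$ yields
\[
\redfat(G;e_1,e_2) \leq \fat(e_1)\fat(e_2)\sum_{w}\frac{1}{\binom{d}{w}_q},
\]
where $w$ runs through $[1,d-e^*]\cup[e^*,d-1]$.

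Finally I would use the symmetry $\binom{d}{w}_q=\binom{d}{d-w}_q$ from Lemma~\ref{lemma gaussian coefficient} to pair up the two intervals, bounding the sum by $2\sum_{w=1}^{d-e^*}1/\binom{d}{w}_q$. Because $e^*$ is an integer with $e^*>d/2$, one checks that $d-e^*\leq\lceil d/2\rceil-1$, so Lemma~\ref{lemma sum over gaussian coeff inverse} bounds this last sum by $q^{-d+1}$. This gives the first inequality $\redfat(G;e_1,e_2)<2\fat(e_1)\fat(e_2)q^{-d+1}$, and the second inequality is immediate from the upper bound $\fat(e)<1/e$ in Lemma~\ref{lemma proportion of fat elements}.

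There is no serious obstacle; the work is bookkeeping. The one point requiring care is verifying that the dimension range in Lemma~\ref{lemma cases for W} feeds cleanly into the hypothesis of Lemma~\ref{lemma fat(G,e) = fat(e) = ...} for \emph{both} $e_1$ and $e_2$ simultaneously, so that the common factor $\fat(e_1)\fat(e_2)$ pulls out of the sum; after that the two intervals are disjoint (since $e^*>d/2$), and the symmetric bound from Lemma~\ref{lemma sum over gaussian coeff inverse} delivers the factor $q^{-d+1}$.
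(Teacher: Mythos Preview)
Your proposal is correct and follows essentially the same route as the paper: a union bound over invariant subspaces $\s{W}$ of dimensions restricted by Lemma~\ref{lemma cases for W}, replacement of $\fat(G_\s{W};e_i)$ by $\fat(e_i)$ via Lemma~\ref{lemma fat(G,e) = fat(e) = ...}, transitivity to collapse the sum over $\s{W}$ to a sum of inverse Gaussian coefficients, and then the symmetry $\binom{d}{w}_q=\binom{d}{d-w}_q$ together with Lemma~\ref{lemma sum over gaussian coeff inverse} and Lemma~\ref{lemma proportion of fat elements}. Your explicit check that the dimension constraint $w\in[1,d-e^*]\cup[e^*,d-1]$ places $w$ in the range required by Lemma~\ref{lemma fat(G,e) = fat(e) = ...} for \emph{both} $e_i$ is a detail the paper leaves implicit.
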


\begin{proof}
	The pair $(g_1,g_2) \in G \times G$ is a reducible $\fat(d,q;e_1,e_2)$-pair if and only if there exists at least one non-trivial and proper subspace $\s{W} \leq \s{V}$ such that $g_i$ is a $\fat(d,q;e_i)$-element in $G_\s{W} $. By Lemma~$\ref{lemma cases for W}$, $\dim(\s{W}) \in [1,d-\max\{e_1,e_2\}] \cup [\max\{e_1,e_2\},d-1]$. We thus obtain the following upper bound for the number of reducible $\fat(d,q;e_1,e_2)$-pairs in $G \times G$.
		\[\redfat(G;e_1,e_2)\vert G \vert^2 \leq \sum_{w} \sum_\s{W} \prod_{i=1,2} ( \fat(G_\s{W};e_i) \, \vert G_\s{W} \vert ),\]
	where $w \in [1,d-\max\{e_1,e_2\}] \cup [\max\{e_1,e_2\},d-1]$, and $\s{W} \leq \s{V}$ with $\dim(\s{W}) = w$. By Lemma~$\ref{lemma fat(G,e) = fat(e) = ...}$, $\prod_{i=1,2} \fat(G_\s{W};e_i) = \fat(e_i)$, and hence
		\[\redfat(G;e_1,e_2) \leq \fat(e_1)\fat(e_2) \sum_{w} \sum_\s{W} \vert G:  G_\s{W} \vert^{-2},\]
	with $w, \s{W}$ as before. Since $G$ acts transitively on the set of all $w$-dimensional subspaces in $\s{V}$ there is a total of $\vert G: G_\s{W} \vert$ such subspaces, whence
		\[\redfat(G;e_1,e_2) \leq \fat(e_1)\fat(e_2)\sum_{w} \vert G:  G_\s{W} \vert^{-1},\]
	where $w$ as before. Using the notation from Definition~$\ref{defintion gaussian coefficient}$, we write $\binom{d}{w}_q = \vert G:G_\s{W} \vert$.
	 Then, since $\binom{d}{w}_q = \binom{d}{d-w}_q $, and since $ d- \max\{e_1, e_2\} \leq \lceil d/2 \rceil -1 $,
    \[\redfat(G;e_1,e_2) \leq 2 \, \fat(e_1) \, \fat(e_2)  \sum_{w=1}^{\lceil d/2 \rceil -1} \textstyle \binom{d}{w}_q^{-1}.\]
	Then, by Lemmas~$\ref{lemma sum over gaussian coeff inverse}$ and~$\ref{lemma proportion of fat elements}$, $\redfat(G;e_1,e_2) < 2 \fat(e_1) \fat(e_2) q^{-d+1} < 2/(e_1e_2)q^{-d+1}$.
\end{proof}

Note that for $G$ with $\SL(d,q) \leq G \leq \GL(d,q)$ we have $\redfat(G) = \sum_{d/2 < e_1,e_2 \leq d} \redfat(G;e_1,e_2)$.
This observation together with the upper bound given in Lemma~$\ref{lemma proportion}$ are the main ingredients of the

\begin{proof}[Proof of Theorem~$\ref{theorem proportion ref and fat}$]
	In the case $d = \max\{e_1,e_2\}$ any $\fat(d,q;e_1,e_2)$-pair is irreducible, and thus $\redfat(G;e_1,e_1) = 0$. Hence, using Lemma~$\ref{lemma proportion}$,
	\begin{align*}
		\redfat(G)	 = \sum_{e_1,e_2} \redfat(G;e_1,e_2) < \sum_{e_1,e_2} 2 /(e_1e_2) q^{-d+1},
	\end{align*}
 where $\lceil (d+1)/2 \rceil \leq e_1, e_2 \leq d-1$.
 An easy argument estimating the sum by an integral shows that $\sum_{i = \lceil (d+1)/2 \rceil}^{d-1} i^{-1} < \ln(2)$.
Hence, $\redfat(G) < 2 \bigl( \ln(2) \bigr)^2 q^{-d+1} < q^{-d+1}$, as required.
\end{proof}

\subsection{Proof of Theorem~\ref{theorem proportion red if fat}}\label{subsection theorem proportion red if fat}

Recall that for a group $G$ with $\SL(d,q) \leq G \leq \GL(d,q)$ we write $\redIfFat(G)$ for the proportion of reducible fat pairs in the set of fat pairs from $G \times G$. Our final task is to prove the upper bound for $\redIfFat(G)$ given in Theorem~$\ref{theorem proportion red if fat}$.

\begin{proof}[Proof of Theorem~$\ref{theorem proportion red if fat}$]
	For integers $e_1,e_2$ with $d/2 < e_1,e_2 \leq d$ we write $\redfat(G;e_1,e_2) \, \vert G \vert^2$ for the number of reducible $\fat(d,q;e_1,e_2)$-pairs in $G \times G$, and $\fat(G;e_1) \fat(G;e_2) \, \vert G \vert^2$ for the number of $\fat(d,q;e_1,e_2)$-pairs in $G \times G$. By Lemma~$\ref{lemma fat(G,e) = fat(e) = ...}$ we have $\fat(G;e_i) = \fat(e_i)$ for $i = 1, 2$, whence
		\[\redIfFat(G) = \frac{\sum_{d/2 < e_1,e_2 \leq d} \redfat(G;e_1,e_2) \, \vert G \vert^2}{\sum_{d/2 < e_1,e_2 \leq d} \fat(e_1) \, \fat(e_2)\, \vert G \vert^2}.\]
	If $\max\{e_1,e_2\} = d$, then every $\fat(d,q;e_1,e_2)$-pair in $G$ is irreducible, and hence $\redfat(G;e_1,e_2) = 0$ in that case. If $e_1,e_2 \in (d/2,d)$, then
		\[\redfat(G;e_1,e_2) < 2 \, \fat(e_1)\, \fat(e_2) \, q^{-d+1}\]
	by Lemma~$\ref{lemma proportion}$. Note also that being a proportion $\fat(e_i) \geq 0$, and thus $\sum_{d/2 < e_1,e_2 \leq d} \fat(e_1)\fat(e_2) \geq \sum_{d/2 < e_1,e_2 < d} \fat(e_1) \fat(e_2)$. We obtain
		\[\redIfFat(G) < \frac{\sum_{d/2 < e_1,e_2 < d} 2 \fat(e_1) \fat(e_2) \, q^{-d+1} }{\sum_{d/2 < e_1,e_2 < d} \fat(e_1) \fat(e_2)} = 2q^{-d+1}.\]
\end{proof}

% ------------------------------------------------------------------------

\section*{Acknowledgement}
	The results presented in this paper improve and expand results from the Diplom thesis of the second author, which was submitted at the University of Bayreuth. She thanks her supervisor, Adalbert Kerber, for valuable discussions and academic guidance during that time. The Diplom thesis, on the other hand, developed from her honours thesis submitted at the University of Western Australia under the supervision of the first and third author.
	
	The second author is also indebted to Gerhard Hi{\ss} for numerous discussions and fruitful suggestions which improved the clarity of this work.

% ------------------------------------------------------------------------

\vspace*{1cm}

%----------Author 1
\noindent \textsc{Alice C. Niemeyer} \\
School of Mathematics and Statistics M019\\
The University of Western Australia\\
35 Stirling Highway, Nedlands, WA 6009\\
Australia\\
email: alice.niemeyer@uwa.edu.au \\

%----------Author 2
\noindent \textsc{Sabina B. Pannek}\\
Lehrstuhl D f\"{u}r Mathematik
RWTH Aachen\\
Templergraben 64, 52062 Aachen\\
Germany\\
email: sabina.pannek@gmail.com \\

%----------Author 3
\noindent\textsc{Cheryl E. Praeger}\\
School of Mathematics and Statistics M019\\
The University of Western Australia\\
35 Stirling Highway, Nedlands, WA 6009\\
Australia\\
email: cheryl.praeger@uwa.edu.au

% ------------------------------------------------------------------------
\end{document}